\theoremstyle{thmstyleone}%
\newtheorem{theorem}{Theorem}
\newtheorem{observation}[theorem]{Observation}%
\newtheorem{lemma}[theorem]{Lemma}
\newtheorem{conclusion}[theorem]{Conclusion}
\theoremstyle{thmstyletwo}%
\newtheorem{example}{Example}%
\theoremstyle{thmstylethree}%
\newtheorem{definition}{Definition}%
\newcommand{\mcl}{\mathcal}
\newcommand{\eps}{\varepsilon}
\newcommand{\N}{\mathbb{N}}
\newcommand{\R}{\mathbb{R}}
\newcommand{\PP}{\mathbb{P}}
\begin{document}

\title[Article Title]{On Exponential Convergence of Random Variables}


\author{\fnm{Dawid} \sur{Tar{\l}owski}}\email{dawid.tarlowski@gmail.com; dawid.tarlowski@im.uj.edu.pl, ORCID 0000-0002-6824-4568}



\affil{\orgdiv{Faculty of Mathematics and Computer Science}, \orgname{Jagiellonian University}, \orgaddress{\street{\L ojasiewicza 6}, \city{Krak\'ow}, \postcode{30-348}, \country{Poland}}}




\abstract{ Given the discrete-time sequence of nonnegative random variables,  general dependencies between the exponential convergence  of the expectations, exponential convergence of the trajectories  and the logarithmic growth of the corresponding expected hitting times are analysed.  The applications are presented: the general results are applied to the areas of optimization, stochastic control and estimation.}

\keywords{ exponential convergence,  stopping times, optimization,  risk sensitive stochastic control}


\pacs[2010 MSC Classification]{60G07  }

\maketitle

\section{Introduction}\label{sec1}
 Various forms of exponential convergence of random variables appear naturally in many applications of Markov chains \cite{Markov}, supermartingales \cite{Liu},\cite{Mori}, and more general stochastic processes. The applications often involve various problems from optimization and control theory. This paper compares several definitions of exponential convergence under general assumptions, and next applies the results to some problems of optimization, control, and estimation.  Section \ref{SEC}  presents the general theory. It is assumed that $X_t\in[0,\infty)$  is some discrete-time sequence of nonnegative random variables, and most attention is paid to the exponential convergence of $X_t$ to zero, although some of the results may be applied to the exponential growth of $X_t$. Section\ref{SEC} shows, among other results, that the convergence rate of the expectations defined by
\begin{equation}\label{A}
A:=\limsup\limits_{t\to\infty}\sqrt[\leftroot{3} t]{E[X_t]}
\end{equation}
 bounds from above the convergence rate of the trajectories
\begin{equation}\label{E1}
\PP[\limsup\limits_{t\to\infty}\sqrt[\leftroot{3} t]{X_t}\leq A]=1
\end{equation}
and that, if $A\in(0,1)$,  the stopping moment $\tau_{\eps}=inf\{t\in\N\colon X_t<\eps\}$ is integrable and  satisfies
\begin{equation}\label{E2}
\limsup\limits_{\eps\to0^+} \frac{E[\tau_\eps]}{|\log(\eps)|}\leq\frac{-1}{\log(A)}.
\end{equation} 
The above statements are very general  and at the same time  the author of this paper did not manage to find the proof of \eqref{E1} and \eqref{E2} in the literature (the special case of $\eqref{E1}$ is proved in \cite{TAR} in the context of continuous optimization). Statement \eqref{E1} does not assume that $A\leq1$ and thus may be used to bound the exponential growth of the trajectories. It is worth to mention that in the literature the exponential decrease of $\PP[X_n\geq\eps]$ is often studied, for instance in the analysis of large deviations where one usually considers  the sequences of the form $X_n=|\frac{S_n-E[S_n]}{n}|$, \cite{Liu}, \cite{EC}. Generally, if for any $\eps>0$ the sequence $\PP[X_t>\eps],{t\in\N},$ decreases exponentially fast then $X_t\to0$ almost sure however the exponential decrease of the trajectories of $X_t$ is not forced. Note also that by Chebyshev inequality for any $\eps>0$ we have $E[X_t]\geq \eps\cdot\PP[X_t\geq\eps]$ 
which implies that  for any $\eps>0$, $\limsup\limits_{t\to\infty}\sqrt[\leftroot{3} t]{E[X_t]}\geq \limsup\limits_{t\to\infty} \sqrt[\leftroot{3} t]{\PP[X_t\geq\eps]}$ and hence:
\begin{equation}\label{E3}
A=\limsup\limits_{t\to\infty}\sqrt[\leftroot{3} t]{E[X_t]}\geq \lim\limits_{\eps\to0^+}\limsup\limits_{t\to\infty} \sqrt[\leftroot{3} t]{\PP[X_t\geq\eps]}.
\end{equation}
Thus, the exponential convergence of the expectations is a very strong convergence mode and the value of $A$ given by \eqref{A} bounds the convergence rate in case of other exponential convergence types \eqref{E1}, \eqref{E2}, \eqref{E3}. Section \ref{SEC} introduces general definitions precisely and proves  \eqref{E1},\eqref{E2} and related results.  Section \ref{SECA} applies those results to the areas of optimization and control. In the context of optimization  we usually deal with the sequences of the form $X_t=f(Z_t)$, where $Z_t$ is some sequence of random variables which represents an optimization process and $f$ represents some real-valued problem function. In such case it is an important question  how fast the sequence  $f(Z_t)$ approaches the global extremum of $f$ and all the above mentioned types of exponential convergence (linear convergence) are being considered, see \cite{Agapie}, \cite{Jager},\cite{Jager2},\cite{Mori},\cite{R1},\cite{geo},\cite{TAR},\cite{TarGEO},\cite{Tikho}, \cite{Auger},\cite{ZZ}. On the other hand, if $X_t\in\R$ describes the wealth process of an investor in the context of portfolio optimization, \cite{AN},\cite{Bielecki},\cite{BJ},\cite{Biswas},\cite{Kuper},\cite{PitStet}, then we are  rather interested in how fast the process $X_t$ grows. Let $W_t$, $t\in\N$,
denote the wealth process and let $\gamma<0$ denote the risk-averse parameter. Section \ref{SECA} shows that under general assumptions (any ergodicity-type assumptions on the log-wealth process are not involved) the following limit of the long-run risk sensitive criterion
$$C=\liminf\limits_{t\to\infty}\frac{1}{t}\frac{1}{\gamma}\log(E[(W_t)^\gamma])$$
determines the log-growth rate of the trajectories of  $W_t$ according to
$$\liminf\limits_{t\to\infty}\frac{1}{t}\log W_t\geq C\ a.s.$$
and that, if $C>0$, the barrier hitting times $T_b=\inf\{t\in\N\colon W_t>b\},\ b\in\R^+,$ are integrable and satisfy
$$\limsup\limits_{b\to+\infty}\frac{E[T_b]}{\log(b)}\leq\frac{1}{C}.$$

 \section{ Exponential convergence of random variables}\label{SEC}

In the whole paper we assume that $(\Omega,\Sigma,\PP)$ is a probability space and that $X_t\colon\Omega\to\R^+$, $t\in\mathbb{N}$, is a sequence of random variables, where $\R^+=[0,+\infty)$. We start with the following definition.
\begin{definition}
Given the sequence $x_t\in\R^+$, $t\in\N$,  define: 
$$\overline{R}(x_t)=\limsup\limits_{t\to\infty}\sqrt[\leftroot{3} t]{x_t}.$$
\end{definition}

We are mostly interested in the  convergence to $0$ and thus we will usually refer to  $\overline{R}(x_t)$  as convergence rate.  If  $x_t\to0$ then $\overline{R}(x_t)\in[0,1]$.  Condition $\overline{R}(x_t)\in (0,1)$ determines the exponential convergence of $x_t$ to zero. We will sometimes slightly abuse the nomenclature and we will refer to $\overline{R}(x_t)$ as convergence rate regardless of whether the $x_t$ converges or not. Condition $\overline{R}(x_t)=1$ excludes the exponential convergence and does not determine the convergence of $x_t$. Condition $\overline{R}(x_t)>1$ implies that some subsequence of $x_t$ diverges exponentially fast. The following  observation presents general characterisation of exponential convergence and is rather easy to prove.

\begin{observation}\label{O1}
 Given $C\in\R^+$ and a sequence $x_t\in \R^+$, the following conditions are equivalent:
\begin{enumerate}
\item $C\geq \limsup\limits_{t\to\infty} \sqrt[\leftroot{3} t]{x_t}$,
\item  $\log C\geq \limsup\limits_{t\to\infty} \frac{1}{t}\log(x_t)$, where $\log(\cdot)=\exp^{-1}(\cdot)$ and $\log(0):=-\infty$,
\item for any $R>C$,  $ \sup\limits_{t\in\N}\frac{x_t}{R^t}<+\infty$.
\item for any $R>C$,  $ \lim\limits_{t\to\infty}\frac{x_t}{R^t}=0$,
\item for any $R>C$, $\sum\limits_{t\in\N}\frac{x_t}{R^t}<\infty$,
\item for any $R>C$, $\sum\limits_{t\in\N}\sum\limits_{i=t}^{\infty}\frac{x_i}{R^i}<\infty$.
\end{enumerate}
If additionally $C\in (0,1)$ and  $\tau_{\eps}:=\inf\{t\in\mathbb{N}\colon x_t<\eps\},\ \eps>0,$ then any of the above conditions implies that

\begin{equation}
\limsup\limits_{\eps\to 0^+} \frac{\tau_{\eps}}{|\log(\eps)|}\leq\frac{1}{|\log(C)|}.
\end{equation}

\end{observation}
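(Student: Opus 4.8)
The plan is to prove the six conditions equivalent through one cycle of implications supplemented by the elementary equivalence $1\Leftrightarrow 2$, and then to read off the stopping-time estimate from condition 3. The equivalence $1\Leftrightarrow 2$ is immediate once one writes $x_t^{1/t}=\exp(\tfrac{1}{t}\log x_t)$ and uses that $\exp$ is continuous and increasing, so that $\limsup_t x_t^{1/t}=\exp\big(\limsup_t \tfrac{1}{t}\log x_t\big)$; the convention $\log 0=-\infty$ absorbs any vanishing terms. For the remaining conditions I would run the cycle $1\Rightarrow 6\Rightarrow 5\Rightarrow 4\Rightarrow 3\Rightarrow 1$. The single engine behind it is a comparison trick: given $R>C$, fix an intermediate $R'$ with $C<R'<R$; by the definition of the $\limsup$ in condition 1 one has $x_t<(R')^t$ for all large $t$, whence $x_t/R^t<(R'/R)^t$ with ratio $R'/R<1$.

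Concretely, $1\Rightarrow 6$ follows because the inner double sum equals $\sum_i i\,x_i/R^i$ after interchanging the order of summation (all terms nonnegative), and this is dominated by $\sum_i i\,(R'/R)^i<\infty$ up to finitely many terms; $6\Rightarrow 5$ is termwise domination; $5\Rightarrow 4$ is the vanishing of the terms of a convergent series; $4\Rightarrow 3$ is the boundedness of a convergent sequence; and $3\Rightarrow 1$ holds because $x_t\le M_R R^t$ gives $x_t^{1/t}\le M_R^{1/t}R\to R$, so $\limsup_t x_t^{1/t}\le R$ for every $R>C$ and hence $\le C$.

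For the stopping-time bound, assume $C\in(0,1)$ and fix any $R$ with $C<R<1$. By condition 3 there is a finite $M_R$ with $x_t\le M_R R^t$ for all $t$, so in particular $x_t\to 0$ and every $\tau_\eps$ is finite. Since $M_R R^t<\eps$ already forces $x_t<\eps$, and (dividing the inequality $R^t<\eps/M_R$ by $\log R<0$, which reverses it) this happens for every $t>(\log\eps-\log M_R)/\log R$, we get $\tau_\eps\le (\log\eps-\log M_R)/\log R+1$. Dividing by $|\log\eps|=-\log\eps$ and letting $\eps\to0^+$ sends the bounded correction terms to zero and leaves $\limsup_{\eps\to0^+}\tau_\eps/|\log\eps|\le 1/|\log R|$.

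Since this holds for every $R\in(C,1)$ and $R\mapsto 1/|\log R|$ is continuous, letting $R\to C^+$ yields the asserted bound $1/|\log C|$. I expect no real difficulty here; the points demanding care are keeping the signs straight when dividing logarithmic inequalities (both $\log R$ and $\log\eps$ are negative), the bookkeeping identifying the double sum in condition 6 with $\sum_i i\,x_i/R^i$, and the final passage $R\to C^+$, which is precisely where the hypothesis $C\in(0,1)$ is needed to leave room for an $R$ strictly between $C$ and $1$.
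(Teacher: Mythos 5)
Your proof is correct. Note that the paper itself offers no proof of Observation \ref{O1} --- it is stated with the remark that it is ``rather easy to prove'' --- so there is no official argument to compare against; your write-up simply fills that gap, and it does so in a way consistent with how the paper later uses the observation: the bound $x_t\leq M_R R^t$ with $M_R=\sup_t x_t/R^t$ from your step $3\Rightarrow 1$ is exactly the paper's equation \eqref{equ}, and your stopping-time computation (smallest $t$ with $M_R R^t<\eps$, divide by $\log R<0$, let $\eps\to0^+$ and then $R\to C^+$) is the same mechanism the paper deploys in the proof of Theorem \ref{T2} with $\hat{C}=-1/\log C$.

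Two trivial points of bookkeeping, neither of which affects validity. First, in $1\Rightarrow 6$ the interchanged double sum is $\sum_i (i+1)\,x_i/R^i$ if $\N$ starts at $0$ (the index $t$ ranges over $t\leq i$), not $\sum_i i\,x_i/R^i$; the geometric domination $\sum_i (i+1)(R'/R)^i<\infty$ is of course unaffected. Second, in the stopping-time part you should dispose of the degenerate case $M_R=0$ (then all $x_t=0$ and $\tau_\eps$ is identically the first index, so the bound is trivial) before taking $\log M_R$; likewise your choice of $R'\in(C,R)$ in the comparison trick silently uses $R'>0$, which holds even when $C=0$ since $R>0$. With those caveats noted, every implication in your cycle $1\Rightarrow 6\Rightarrow 5\Rightarrow 4\Rightarrow 3\Rightarrow 1$, the equivalence $1\Leftrightarrow 2$ via $\limsup_t x_t^{1/t}=\exp\bigl(\limsup_t \tfrac1t\log x_t\bigr)$ on the extended reals, and the final passage $R\to C^+$ (which indeed is where $C\in(0,1)$ is needed) are sound.
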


Observation \ref{O1} implies that the value of the limit $\overline{R}(x_t)=\limsup\limits_{t\to\infty} \sqrt[\leftroot{3} t]{x_t}$ is the infimum of constants $C\geq0$ such that for any $R>C$:
\begin{equation}\label{equ}
x_t\leq M_R\cdot R^t, \ t\in\N, \mbox{ where }  M_R:=\sup\limits_{t\in\N}\frac{x_t}{R^t}<\infty.
\end{equation}
 Equation \eqref{equ} is satisfied for any $R> \overline{R}(x_t)$.

Given any measurable sequence $X_t\geq0$, $t\in\N$, we may consider the random variable  $C\colon\Omega\to[0,+\infty]$ given by
\begin{equation}\label{CC}
C:=\overline{R}(X_t)= \limsup\limits_{t\to\infty} \sqrt[\leftroot{3} t]{X_t}.
\end{equation}
 Below, if $E[X_t]=\infty$  then we put $\sqrt[\leftroot{3} t]{\infty}:=\infty$.  Theorem \ref{T1} does not assume that $X_t\to0$ and may describe the divergence of $X_t$ to infinity.
 
\begin{theorem}\label{T1}
Let $X_t\geq0$. The convergence rate of the trajectories is bounded from above by the convergence rate of the expectations in sense 
$$\PP[ \limsup\limits_{t\to\infty} \sqrt[\leftroot{3} t]{X_t}\leq \limsup\limits_{t\to\infty} \sqrt[\leftroot{3} t]{E[X_t]}]=1.$$
\end{theorem}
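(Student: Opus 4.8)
The plan is to reduce the almost-sure statement to a first Borel--Cantelli argument combined with the deterministic characterisation of the convergence rate supplied by Observation \ref{O1}. Write $A:=\limsup_{t\to\infty}\sqrt[t]{E[X_t]}$ for the (deterministic) convergence rate of the expectations. If $A=\infty$ there is nothing to prove, since $\limsup_{t\to\infty}\sqrt[t]{X_t}\leq\infty$ holds surely; hence assume $A<\infty$. In that case $E[X_t]<\infty$ for all sufficiently large $t$, for otherwise $\sqrt[t]{E[X_t]}=\infty$ for infinitely many $t$ and $A=\infty$. Thus from some index onward the expectations form a genuine $\R^+$-valued sequence to which Observation \ref{O1} applies.

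The key estimate is Markov's inequality. Fix $R>A$; since $A\geq0$ this forces $R>0$, so $R^t>0$ for every $t$, and
\[
\PP[X_t>R^t]\leq\frac{E[X_t]}{R^t}.
\]
Summing over $t$ and applying Observation \ref{O1} with $C=A$ to the deterministic sequence $x_t=E[X_t]$ — condition (1) holds by the very definition of $A$, so condition (5) yields $\sum_t E[X_t]/R^t<\infty$ for every $R>A$ — we obtain convergence of $\sum_t\PP[X_t>R^t]$: the finitely many initial indices at which $E[X_t]$ may be infinite each contribute at most $1$, and the tail is dominated by $\sum_t E[X_t]/R^t<\infty$. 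The first Borel--Cantelli lemma then gives $\PP[X_t>R^t\text{ for infinitely many }t]=0$, so almost surely $X_t\leq R^t$ for all but finitely many $t$, whence $\limsup_{t\to\infty}\sqrt[t]{X_t}\leq R$ almost surely.

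Finally I would close the gap between $R$ and $A$ by a countable limiting argument. Choose any sequence $R_n\downarrow A$ with $R_n>A$; the previous step gives $\PP[\limsup_{t\to\infty}\sqrt[t]{X_t}\leq R_n]=1$ for each $n$. Intersecting these countably many full-measure events produces a set of probability $1$ on which $\limsup_{t\to\infty}\sqrt[t]{X_t}\leq R_n$ holds simultaneously for all $n$, and therefore $\limsup_{t\to\infty}\sqrt[t]{X_t}\leq\inf_n R_n=A$ on that set. This is precisely the asserted inequality holding with probability $1$.

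I expect no deep obstacle: the probabilistic input is the one-line application of Markov's inequality, and the whole argument hinges on converting the $\limsup$ condition on $E[X_t]$ into summability via Observation \ref{O1}. The only points demanding care are the degenerate case $A=\infty$, the handling of the finitely many indices where $E[X_t]$ may be infinite (which do not affect the tail of the Borel--Cantelli series), and the passage from each fixed $R>A$ to $A$ itself, which must proceed through a countable sequence $R_n\downarrow A$ so that the exceptional null sets can be unioned without destroying the full-measure conclusion.
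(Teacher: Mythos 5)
Your proposal is correct and takes essentially the same route as the paper's proof: the identical key estimate (Markov's inequality giving $\PP[X_t>R^t]\leq E[X_t]/R^t$, with summability supplied by point 5 of Observation \ref{O1} applied to $x_t=E[X_t]$), followed by a countable limiting argument in $R$. The only cosmetic difference is that you invoke the first Borel--Cantelli lemma by name, whereas the paper writes out the same tail-sum computation $\lim_{t\to\infty}\sum_{i=t}^{\infty}E[X_i]/R^i=0$ explicitly via continuity from above; your explicit treatment of the case $A=\infty$ and of the finitely many indices with $E[X_t]=\infty$ is a careful addition, not a divergence in method.
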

\begin{proof}
For the proof it is enough to assume that $A:=\limsup\limits_{t\to\infty} \sqrt[\leftroot{3} t]{E[X_t]}<\infty$ and it is enough to show that for any $\eps>0$ 
$$\PP[\limsup\limits_{t\to\infty} \sqrt[\leftroot{3} t]{X_t}>A+\eps]=0.$$
Fix $\eps>0$ and note that $\limsup\limits_{t\to\infty} \sqrt[\leftroot{3} t]{X_t}=\lim\limits_{t\to\infty}\sup\limits_{i\geq t}\sqrt[\leftroot{3} i]{X_i}$ . We thus have
$$\PP[\limsup\limits_{t\to\infty} \sqrt[\leftroot{3} t]{X_t} >A+\eps]=\PP[\lim\limits_{t\to\infty}\sup\limits_{i\geq t}\sqrt[\leftroot{3} i]{X_i}>A+\eps].$$
By the monotonicity $\sup\limits_{i\geq t+1}\sqrt[\leftroot{3} i]{X_i}\leq\sup\limits_{i\geq t}\sqrt[\leftroot{3} i]{X_i}$, we have
$$ \{\lim\limits_{t\to\infty}\sup\limits_{i\geq t}\sqrt[\leftroot{3} i]{X_i}>A+\eps \}\subset\bigcap\limits_{t\in\mathbb{N}}\{\sup\limits_{i\geq t}\sqrt[\leftroot{3} i]{X_i}>A+\eps\}$$
and 

$$\PP[\lim\limits_{t\to\infty}\sup\limits_{i\geq t}\sqrt[\leftroot{3} i]{X_i}>A+\eps]\leq\lim\limits_{t\to\infty}\PP[\sup\limits_{i\geq t}\sqrt[\leftroot{3} i]{X_i}>A+\eps]=$$
$$=\lim\limits_{t\to\infty}\PP[\bigcup\limits_{i=t}^{+\infty}\{X_i>(A+\eps)^i\}]\leq\lim\limits_{t\to\infty}\sum\limits_{i=t}^{\infty}\PP[X_i>(A+\eps)^i].$$
By Chebyshev inequality 
$$ \lim\limits_{t\to\infty}\sum\limits_{i=t}^{\infty}\PP[X_i>(A+\eps)^i] \leq \lim\limits_{t\to\infty}\sum\limits_{i=t}^{\infty}\frac{E[X_i]}{(A+\eps)^i},$$
and by point 5. of Observation \ref{O1} (with $x_t:=E[X_t]$ and $R=A+\eps$),
$$\lim\limits_{t\to\infty}\sum\limits_{i=t}^{\infty}\frac{E[X_i]}{(A+\eps)^i}=0.$$
\end{proof}
\begin{conclusion}\label{C1}
 The convergence rate of the expectations $E[X_t]$ is bounded from below by the essential suppremum of $\overline{R}(X_t)$, i.e.
$$ \limsup\limits_{t\to\infty} \sqrt[\leftroot{3} t]{E[X_t]}\geq \inf\{\hat{C}\in\R\colon \PP[\overline{R}(X_t)\leq\hat{C}]=1\}.$$
\end{conclusion}
%

Now we will focus on the convergence of the expectations of the following hitting times 
$$\tau_\epsilon=\inf\{t\in\N\colon X_t<\eps\},\ \eps>0.$$
Assume from now on that $X_t\to0$ a.s. so $\overline{R}(X_t)=\limsup\limits_{t\to\infty} \sqrt[\leftroot{3} t]{X_t}\leq 1$  a.s. Let 
$$C:=ess\sup \overline{R}(X_t)$$ denote the essential supremum of $\overline{R}(X_t)$. By point 3. of Observation \ref{O1}  for any $R> C$ there is some random variable $M_R\colon\Omega\to\R^{+}$  with
$$X_{t}\leq M_R\cdot R^t\ a.s. $$
 If $C\in (0,1)$  then again by Observation \ref{O1},
$$\limsup\limits_{\eps\to0^+}\frac{\tau_\eps}{|\log(\eps)|}\leq \frac{-1}{\log(C)}\ a.s.$$
At the same time the expectation of $\tau_{\epsilon}$ may be infinite, see a simple example below.
\begin{example}
Let $\Omega=\{0,1,2,\dots\}$, $\Sigma=\mcl{P}(\Omega)$ be the family of all subsets of $\Omega$ and let $p_n:=\PP[n], n\in\Omega$. Assume that $X_t$ is the characteristic function of the complement of the set $\N_t:=\{0,1,\dots,t\}$, i.e.
$$X_{t}(n)=1_{\N\setminus\N_t}(n),\ n\in\N.$$
Clearly we have $X_t(n)=0\Leftrightarrow n\leq t$. Hence, $\limsup\limits_{t\to\infty} \sqrt[\leftroot{3} t]{X_t}=0$. At the same time, for $\eps\in (0,1)$ we have 
$$\tau_\eps(n)={n},\ n\in\Omega,$$
and
$$E[\tau_\eps]=\sum\limits_{t\in\N}\PP[\tau_\eps> t]=\sum\limits_{t\in\N}\PP[X_t=1]=\sum\limits_{t=0}^{\infty}\PP[\N\setminus\N_t]=\sum\limits_{t=0}^{\infty}\sum\limits_{n> t}p_n.$$
 The above sum may be infinite depending on how the probabilities $p_n$ are chosen. At the same time $\overline{R}(X_t)=0$ and  $\lim\limits_{\eps\to0^+}\frac{\tau_\eps}{|\log(\eps)|}=0$ a.s.
\end{example}

If the expectations $E[X_t]$ decrease exponentially fast then $E[\tau_\eps]$ is finite and Theorem \ref{T2} shows that the convergence rate of the expectations $\overline{R}(E[X_t])$ controls the convergence of $E[\tau_\eps]$ with $\eps\to0^+$.  

\begin{theorem}\label{T2}
If $A:=\overline{R}(E[X_t])=\limsup\limits_{t\to\infty} \sqrt[\leftroot{3} t]{E[X_t]}<1$ then 
$$\limsup\limits_{\eps\to0^+} \frac{E[\tau_\eps]}{|\log(\eps)|} \leq \frac{-1}{\log(A)}.$$
\end{theorem}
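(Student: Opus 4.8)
The plan is to bound $E[\tau_\eps]$ directly through its tail-sum representation, combined with the geometric decay of $E[X_t]$ furnished by the hypothesis $A<1$. First I would use the identity $E[\tau_\eps]=\sum_{t\in\N}\PP[\tau_\eps>t]$, valid for the $\N$-valued variable $\tau_\eps$. Since $\tau_\eps>t$ forces $X_t\geq\eps$, we have $\{\tau_\eps>t\}\subseteq\{X_t\geq\eps\}$, so Chebyshev's inequality gives $\PP[\tau_\eps>t]\leq E[X_t]/\eps$; the trivial estimate $\PP[\tau_\eps>t]\leq 1$ must be retained for the small-$t$ terms, so that $\PP[\tau_\eps>t]\leq\min\{1,E[X_t]/\eps\}$.

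Next I would fix any $R\in(A,1)$. By point 3 of Observation \ref{O1} applied to $x_t=E[X_t]$, there is a finite constant $M_R$ with $E[X_t]\leq M_R R^t$ for all $t$, whence
\[
E[\tau_\eps]\leq\sum_{t\in\N}\min\left\{1,\frac{M_R R^t}{\eps}\right\}.
\]
The key step is to split this series at the threshold $t^*_\eps:=\log(\eps/M_R)/\log R$, at which $M_R R^t/\eps$ crosses the value $1$. For $t\leq t^*_\eps$ I bound each summand by $1$, which accounts for at most $t^*_\eps+1$ terms; for $t>t^*_\eps$ I sum the geometric tail, and because $R^{\lceil t^*_\eps\rceil}\leq R^{t^*_\eps}=\eps/M_R$ this tail is at most the $\eps$-independent constant $1/(1-R)$. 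This yields
\[
E[\tau_\eps]\leq t^*_\eps+1+\frac{1}{1-R},
\]
which in passing shows $E[\tau_\eps]<\infty$.

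Finally I would divide by $|\log\eps|$ and let $\eps\to0^+$. Writing $t^*_\eps=(|\log\eps|+\log M_R)/|\log R|$ for small $\eps$, the leading term gives $t^*_\eps/|\log\eps|\to 1/|\log R|$, while the contributions of $1+1/(1-R)$ and of the constant $\log M_R$ are annihilated by the division by $|\log\eps|\to\infty$. Hence $\limsup_{\eps\to0^+}E[\tau_\eps]/|\log\eps|\leq 1/|\log R|=-1/\log R$ for every $R\in(A,1)$. Letting $R\to A^+$ and using continuity of $R\mapsto -1/\log R$ then delivers the claimed bound $-1/\log(A)$.

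The obstacle I anticipate is essentially bookkeeping rather than conceptual: controlling the integer rounding in the split (the $\lfloor t^*_\eps\rfloor$ count of head terms and the $\lceil t^*_\eps\rceil$ starting index of the tail) and verifying that every error term so introduced is $O(1)$ in $\eps$, so that it vanishes after dividing by $|\log\eps|$. The mechanism itself — a head of length $\sim|\log\eps|/|\log R|$ that dominates and a uniformly bounded geometric tail — is transparent once the threshold split is set up, and the only delicate point is taking the two limits ($\eps\to0^+$ first, then $R\to A^+$) in the correct order.
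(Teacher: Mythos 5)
Your proof is correct, and it takes a genuinely different --- and more elementary --- route than the paper. The paper proceeds through an auxiliary random time: Lemma \ref{HC} introduces $H_R=\inf\{t\in\N\colon \sup_{i\geq t}X_i/R^i\leq1\}$, proves $E[H_R]<\infty$ via a union bound, Chebyshev, and the double-sum criterion (point 6 of Observation \ref{O1}), and the proof of Theorem \ref{T2} then bounds $\tau_\eps$ pathwise by $\max\{H_C,\lceil\hat{C}|\log\eps|\rceil\}+1$ and passes to the limit using Lebesgue's dominated convergence theorem. You instead bound $E[\tau_\eps]=\sum_{t}\PP[\tau_\eps>t]$ directly through the marginal estimate $\PP[\tau_\eps>t]\leq\min\{1,E[X_t]/\eps\}$ (only point 3 of Observation \ref{O1} is needed, not point 6) and a split of the series at the crossover time $t^*_\eps$; this removes both the auxiliary lemma and the dominated-convergence step, and yields in passing the explicit non-asymptotic bound $E[\tau_\eps]\leq t^*_\eps+1+\frac{1}{1-R}$, which gives integrability of $\tau_\eps$ for free. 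What the paper's route buys in exchange is the pathwise content of Lemma \ref{HC}: a single integrable random time after which the entire trajectory is dominated by $R^t$, uniformly over $\eps$ --- a statement of independent interest that your marginal-by-marginal argument does not produce. Your bookkeeping is sound (the head has at most $t^*_\eps+1$ terms, the tail is at most $\frac{1}{1-R}$ since $R^{\lceil t^*_\eps\rceil}\leq\eps/M_R$, and the order of limits, $\eps\to0^+$ for fixed $R$ and then $R\to A^+$, is exactly right); note only that when $A=0$ the last step is not continuity at $A$ but the observation that $-1/\log R\to0$ as $R\to0^+$, which matches the paper's convention $\frac{-1}{\log(A)}:=0$ in that case.
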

Above, if $A=0$ then $\frac{-1}{\log(A)}:=0$. The  proof of the above theorem is based on the following lemma which follows from Theorem \ref{T1} and Observation \ref{O1}.

\begin{lemma}\label{HC}
Assume that $A=\limsup\limits_{t\to\infty} \sqrt[\leftroot{3} t]{E[X_t]}<1$.  For any $R>A$
the following random variable
$$H_R:=\inf\{t\in\N\colon \sup\limits_{i\geq t} \frac{X_i}{R^i}\ \leq \ 1\}$$
has finite expectation.
\end{lemma}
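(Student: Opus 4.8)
The plan is to compute $E[H_R]$ through the tail-sum formula for a nonnegative integer-valued random variable,
$$E[H_R] = \sum_{t\in\N}\PP[H_R > t],$$
and to control each tail probability by the corresponding tail of the series $\sum_i E[X_i]/R^i$, whose summability (in its nested form) is furnished directly by point 6 of Observation \ref{O1}. So the whole argument reduces to a union bound, Markov's inequality, and one appeal to Observation \ref{O1}.

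First I would read off the event $\{H_R > t\}$ from the definition of the infimum. The key point is that $t\mapsto \sup_{i\geq t} X_i/R^i$ is non-increasing, so the set $\{s:\sup_{i\geq s}X_i/R^i\leq 1\}$ is upward closed; hence $H_R\leq t$ holds precisely when $\sup_{i\geq t}X_i/R^i\leq 1$. Passing to complements gives the clean description
$$\{H_R > t\} = \Big\{\sup_{i\geq t}\frac{X_i}{R^i} > 1\Big\} = \bigcup_{i\geq t}\{X_i > R^i\}.$$
(Here Theorem \ref{T1} guarantees that $H_R$ is a.s.\ finite: since $R>A$, almost surely $\limsup_{t\to\infty}\sqrt[t]{X_t}\leq A<R$, so $X_t<R^t$ eventually and $\sup_{i\geq t}X_i/R^i\leq 1$ for large $t$; as it happens, the integrability bound below recovers this a.s.\ finiteness on its own.)

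Next I would apply the union bound and then Markov's (Chebyshev's) inequality termwise:
$$\PP[H_R > t] \leq \sum_{i\geq t}\PP[X_i > R^i] \leq \sum_{i\geq t}\frac{E[X_i]}{R^i}.$$
Summing over $t$ and invoking point 6 of Observation \ref{O1} with $x_t:=E[X_t]$ and the same $R>A$ closes the argument:
$$E[H_R] = \sum_{t\in\N}\PP[H_R > t] \leq \sum_{t\in\N}\sum_{i\geq t}\frac{E[X_i]}{R^i} < \infty.$$

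The only delicate step is the first one: extracting $\{H_R>t\}$ correctly. One must use the \emph{monotonicity} of the inner supremum to collapse the condition ``$\sup_{i\geq s}X_i/R^i\leq 1$ for some $s\leq t$'' into the single condition at $s=t$; otherwise the event would present as an intersection over $s\leq t$ rather than the union $\bigcup_{i\geq t}\{X_i>R^i\}$, and the subsequent union bound would not line up with the nested series in Observation \ref{O1}. Everything after that identification is the routine tail-sum/Markov chain of inequalities, with the crucial convergence of the double series supplied ready-made.
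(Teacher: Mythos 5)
Your proposal is correct and follows essentially the same route as the paper's proof: the tail-sum formula $E[H_R]=\sum_{t\in\N}\PP[H_R>t]$, the identification $\{H_R>t\}=\bigcup_{i\geq t}\{X_i>R^i\}$, a union bound plus Chebyshev (Markov) termwise, and point 6 of Observation \ref{O1} applied to $x_t=E[X_t]$. Your extra care in justifying the event identification via monotonicity of $t\mapsto\sup_{i\geq t}X_i/R^i$ is a point the paper leaves implicit, but it is the same argument.
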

\begin{proof}

Fix $R>A$. By Theorem \ref{T1} and by point 4. of Observation \ref{O1} we have $\PP[H_R<\infty]=1$. We need to show $E[H_R]<\infty$. We have:
$$E[H_R]=\sum\limits_{t\in\N}\PP[H_R>t].$$

 By definition of $H_R$ and by Chebyszev inequality,
$$\PP[H_R>t]=\PP[\bigcup\limits_{i=t}^{\infty}\{X_i>R^i\}]\leq\sum\limits_{i=t}^{\infty}\PP[X_i>R^i]\leq \sum\limits_{i=t}^\infty\frac{E[X_i]}{R^i}.$$
We have thus shown that 
$$E[H_R]\leq \sum\limits_{t\in\N}\sum\limits_{i=t}^\infty\frac{E[X_i]}{R^i}$$
and the above series is finite by point 6. of Observarion \ref{O1} applied to the sequence $x_t=E[X_t]$.
\end{proof}

\emph{Proof of Theorem \ref{T2}}. Fix $C\in (A,1)$. Under notation of Lemma \ref{HC}, for any $t\in\N$  on set $\{t\geq H_C\}$ we have $X_{t+i}\leq C^{t+i}$ for all $i\in\N$. In other words, for any $t\geq H_C$ we have $X_t\leq C^t$. At the same time, as $C<1$, note that for 
$$\hat{C}:= \frac{-1}{log(C)}$$ the ceiling $t=\lceil \hat{C}\cdot |
\log(\eps)| \rceil$ is the smallest natural number with $C^t\leq\eps$. This implies that if $ t\geq\max\{H_C,\lceil \hat{C}\cdot |\log(\eps)| \rceil \}$ 
then $X_t\leq\eps$ and $X_{t+1}<\eps$. Hence, 
$$\tau_\eps\leq \max\{H_C, \lceil \hat{C}\cdot |\log(\eps)| \rceil  \}+1$$ and

\begin{equation}\label{tauH}
\frac{\tau_\eps}{|\log(\eps)|}\leq\frac{1}{|\log(\eps)|}(\max\{H_C, \lceil \hat{C}\cdot |\log(\eps)| \rceil  \}+1).
\end{equation}
 Thus
\begin{equation}\label{tauHH}
E[\frac{\tau_\eps}{|\log(\eps)|}]\leq\frac{1}{|\log(\eps)|}E[\max\{H_C, \lceil \hat{C}\cdot |\log(\eps)| \rceil  \}]+\frac{1}{|\log(\eps)|}.
\end{equation}
 Lemma \ref{HC} allows us to use the  Lebesgue's dominated convergence theorem:
$$\frac{1}{|\log(\eps)|}E[\max\{H_C, \lceil \hat{C}\cdot |\log(\eps)| \rceil  \}]=E[\max\{\frac{H_C}{|\log(\eps)|},\hat{C}\}]\to\hat{C} \mbox{ with } \eps\to0^+.$$
Equation \eqref{tauHH} and the above imply that $\limsup\limits_{\eps\to0^+}\frac{E[\tau]}{|\log(\eps)|}\leq\hat{C}\mbox{ for any } C\in (A,1)$ and thus
 $$\limsup\limits_{\eps\to0^+}\frac{E[\tau_\eps]}{|\log(\eps)|}\leq\hat{A}=\frac{-1}{\log(A)}.$$ \hfill{$\Box$}

\section{Applications.}\label{SECA}
The results of the previous sections will be applied to some problems of optimization, control and estimation.
\subsection{Optimization}
Let $(K,d)$ be some metric space and $f\colon K\to\R^+$ be a Borel-measurable function which attains its global minimum $f_{\min}$. Assume that $X_t\in K$, $t\in\N$, represents an optimization process. The hitting time of the $\eps$-optimal sublevel set $\{x\in A\colon f(x)<f_{\min}+\eps\}$ is defined by
$$\tau_{\eps}=\inf\{t\in\N\colon f(X_t)<f_{\min}+\epsilon\}.$$
Theorems \ref{T1} and \ref{T2} give us the relations between convergence rate of the trajectories $f(X_t)$, the convergence rate of the expectations $E[|f(X_t)-f_{\min}|]$ and the convergence behaviour of $E[\tau_\eps]$ under general assumptions on the sequence $f(X_t)$. In particular, if the following constant 
$$A=\overline{R}(E[f(X_t)-f_{\min}],t\in\N)=\limsup\limits_{t\to\infty}\sqrt[\leftroot{3} t]{E[f(X_t)-f_{\min}]}$$ 
satisfies  $A<1$ then by Theorem \ref{T2} we have the following upper bound $\limsup\limits_{t\to\infty}\frac{E[\tau_\eps]}{|\log(\eps)|}\leq\frac{-1}{\log(A)}$ and the following control of the asymptotic behaviour of $E[\tau_\eps]$ with $\eps\to0^+$ :
$$E[\tau_\eps]\leq C(\eps)\cdot |\log(\eps)|,$$ 
where  $C\colon (0,1)\to \R^+$ satisfies $\limsup\limits_{\eps\to0^+}C(\eps)\leq\hat{A}=\frac{-1}{\log(A)}$. This  strengthens Theorem 7 from \cite{TAR}. We also have, by \eqref{E1} and \eqref{E3},
$$\PP[\limsup\limits_{t\to\infty}\sqrt[\leftroot{3} t]{f(X_t)-f_{min}}\leq A]=1 \mbox { and } 
\lim\limits_{\eps\to0^+}\limsup\limits_{t\to\infty} \sqrt[\leftroot{3} t]{\PP[f(X_t)\geq f_{\min}+\eps]}\leq 
A. $$

\subsection{Risk sensitive control} Let $W_t\colon\Omega\to\R^+$, $t\in\N$, represent the wealth process of an investor, see \cite{AN},\cite{Bielecki},\cite{BJ},\cite{Biswas},\cite{PitStet} for details. Let $\gamma\neq0$ represent the risk-averse parameter of an investor so we are interested in the limit of the long-run risk-sensitive criterion of the following log wealth growth:

\begin{equation}\label{CCCC}
C=\liminf\limits_{t\to\infty}\frac{1}{t}\frac{1}{\gamma}\log(E[(W_t)^\gamma]).
\end{equation}

We will assume that $\gamma<0$ which is a common investment criterion in the context of long run optimization. This section shows how the value of the risk-sensitive criterion determines the exponential growth of the trajectories of the wealth process $W_t$ and the logarithmic growth of the expected barrier-hitting time $T_b$ with $b\nearrow\infty$. This type of convergence behaviour is natural under suitable ergodicity-type assumptions on the log-wealth process, see for instance the asymptotic optimality principle, \cite{AN}. This section brings to attention that the log  growth rate of the trajectories of $W_t$ and the logarithmic growth of the corresponding expected hitting times are determined by the limit $\eqref{CCCC}$ under general assumptions on $W_t$ (no ergodicity  involved) which follows from the results of Section \ref{SEC}.

\begin{theorem}\label{BAR}
 Let $\gamma<0$ and let $W_t\colon\Omega\to(0,\infty)$ satisfy $E[(W_t)^\gamma]<\infty$, $t\in\N$. The constant
$$C=\liminf\limits_{t\to\infty}\frac{1}{t}\frac{1}{\gamma}\log(E[(W_t)^\gamma])$$
determines the log-growth of $W_t$ according to
$$\liminf\limits_{t\to\infty}\frac{1}{t}\log W_t\geq C\ a.s.$$
Additionally, if $C>0$ then the following hitting times $T_b=\inf\{t\in\N\colon W_t>b\},\ b\in\R^+,$ are integrable and satisfy
$$\limsup\limits_{b\to+\infty}\frac{E[T_b]}{\log(b)}\leq\frac{1}{C}.$$
\end{theorem}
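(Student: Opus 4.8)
The plan is to reduce everything to Theorems \ref{T1} and \ref{T2} via the substitution $X_t:=(W_t)^{\gamma}$. Since $\gamma<0$ and $W_t>0$, each $X_t$ is a strictly positive random variable with $E[X_t]=E[(W_t)^\gamma]<\infty$, so the hypotheses of Theorem \ref{T1} hold. Writing $A:=\overline{R}(E[X_t])=\limsup_{t\to\infty}\sqrt[\leftroot{3} t]{E[X_t]}$, the first task is the purely algebraic identity $\log A=\gamma C$. This follows by taking logarithms in the definition of $A$ and then using that multiplication by $1/\gamma<0$ interchanges $\limsup$ and $\liminf$: with $u_t:=\tfrac1t\log E[X_t]$ one has $C=\liminf_t \tfrac{1}{\gamma}u_t=\tfrac{1}{\gamma}\limsup_t u_t=\tfrac{1}{\gamma}\log A$. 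In particular $A=e^{\gamma C}$, so $C>0$ is equivalent to $A<1$; this is exactly the regime where Theorem \ref{T2} applies, and there $A<1$ also forces $X_t\to0$ a.s.\ (equivalently $W_t\to\infty$ a.s.) via Theorem \ref{T1} and point 4 of Observation \ref{O1}.

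For the almost sure log-growth bound I would apply Theorem \ref{T1} to $X_t$ to get $\limsup_t \sqrt[\leftroot{3} t]{X_t}\le A$ a.s., and then pass to logarithms through the equivalence of conditions 1 and 2 in Observation \ref{O1}: $\limsup_t \tfrac{1}{t}\log X_t=\limsup_t \tfrac{\gamma}{t}\log W_t\le \log A$ a.s. The same sign reversal as above, now applied pathwise, turns $\limsup_t \gamma b_t$ into $\gamma\liminf_t b_t$ for $b_t:=\tfrac1t\log W_t$, yielding $\gamma\liminf_t \tfrac1t\log W_t\le \log A$; dividing by $\gamma<0$ reverses the inequality and gives $\liminf_t \tfrac1t\log W_t\ge \tfrac{\log A}{\gamma}=C$ a.s.

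For the hitting-time estimate, assume $C>0$, so $A<1$. The key observation is the pathwise event identity coming from the fact that $x\mapsto x^{\gamma}$ is strictly decreasing for $\gamma<0$: $\{W_t>b\}=\{(W_t)^\gamma< b^\gamma\}=\{X_t<b^\gamma\}$. Hence the barrier-hitting time of $W_t$ at level $b$ coincides with the sublevel hitting time of $X_t$ at level $\eps:=b^\gamma$, i.e.\ $T_b=\tau_{\eps}$ with $\eps=b^\gamma$. Since $\gamma<0$, $b^\gamma\to 0^+$ as $b\to+\infty$, and $|\log\eps|=|\gamma|\log b=-\gamma\log b$ for $b>1$. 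Integrability of $T_b$ for $b>1$ is furnished by the bound on $E[\tau_\eps]$ established inside the proof of Theorem \ref{T2} via Lemma \ref{HC} (valid for every $\eps\in(0,1)$), and extends to all $b$ by the monotonicity $b\mapsto T_b$. Feeding $\eps=b^\gamma$ into the conclusion of Theorem \ref{T2} gives $\limsup_{b\to+\infty}\tfrac{E[T_b]}{-\gamma\log b}\le \tfrac{-1}{\log A}$, and multiplying by $-\gamma>0$ together with $\log A=\gamma C$ produces the asserted bound $\limsup_{b\to+\infty}\tfrac{E[T_b]}{\log b}\le \tfrac{\gamma}{\log A}=\tfrac{1}{C}$.

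The only real obstacle is bookkeeping the sign reversals forced by $\gamma<0$: the interchange of $\liminf$ and $\limsup$ under multiplication by $1/\gamma$, the reversal of the final inequality upon dividing by $\gamma$, and the reversal of the barrier inequality $\{W_t>b\}=\{X_t<b^\gamma\}$ together with $b^\gamma\to0^+$. Each step is elementary, but a single mismatched sign collapses the whole argument, so I would verify each reversal explicitly rather than absorb it silently.
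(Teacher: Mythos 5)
Your proof is correct and follows essentially the same route as the paper's own argument: substituting $X_t=(W_t)^\gamma$, establishing $A=\exp(\gamma C)$ (the paper's equation \eqref{Aqua}), invoking Theorems \ref{T1} and \ref{T2}, and changing variables $\eps=b^\gamma$ (the paper does this via the intermediate $T(\eps)=\inf\{t\in\N\colon W_t>\tfrac1\eps\}$ and $\tau_\eps=T(\eps^{1/|\gamma|})$, which is the same substitution). Your handling of the sign reversals and your explicit remarks on integrability --- via the bound $\tau_\eps\leq\max\{H_C,\lceil\hat{C}|\log(\eps)|\rceil\}+1$ from the proof of Theorem \ref{T2} plus monotonicity of $b\mapsto T_b$ --- are, if anything, slightly more careful than the paper's.
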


Let $X_t:=\frac{1}{W_t}$. Recall that 
\begin{equation}\label{XW}E[(X_t)^{p_1}]^{\frac{1}{p_1}}\leq E[(X_t)^{p_2}]^{\frac{1}{p_2}}\mbox { for }0<p_1<p_2<\infty,\end{equation}
assuming that the above expectations exist.   For $\gamma<0$ let
$$C_\gamma=\liminf\limits_{t\to\infty}\frac{1}{t}\frac{1}{\gamma}\log(E[(W_t)^\gamma]).$$
By  elementary calculations and by \eqref{XW} one may show that  
\begin{equation}\label{Cgamma}
C_{\gamma_1}\leq C_{\gamma_2}\mbox{ for }\gamma_1 <\gamma_2<0.
\end{equation}
See also Lemma 2.1 in \cite{Kuper} for the above. Theorem \ref{BAR} and Equation \eqref{Cgamma} immediately lead to the following.

\begin{conclusion}
 Assume that $W_t\colon\Omega\to(0,\infty)$ satisfy $E[(W_t)^\gamma]<\infty$, $t\in\N$, for any $\gamma<0$ close to zero. The following limit
$$C=\lim\limits_{\gamma\to0^-}\liminf\limits_{t\to\infty}\frac{1}{t}\frac{1}{\gamma}\log(E[(W_t)^\gamma])$$
bounds the log-growth of $W_t$ according to $\liminf\limits_{t\to\infty}\frac{1}{t}\log W_t\geq C\ a.s.$
Additionally, if $C>0$ then the  hitting times $T_b=\inf\{t\in\N\colon W_t>b\}$ are integrable and satisfy
$$\limsup\limits_{b\to+\infty}\frac{E[T_b]}{\log(b)}\leq\frac{1}{C}.$$
\end{conclusion}\textbf{Proof of Theorem \ref{BAR}}. As $\gamma<0$, equation \eqref{CCCC} implies that
 $$C\cdot\gamma=\limsup\limits_{t\to\infty}\frac{1}{t}\log(E[(W_t)^\gamma]).$$
Hence, \begin{equation}\label{Aqua}\exp(C\gamma)=\limsup\limits\limits_{t\to\infty} (E[(W_t)^\gamma])^\frac{1}{t}.\end{equation} By Theorem \ref{T1},
$$\limsup\limits_{t\to\infty}((W_t)^\gamma)^\frac{1}{t}\leq\exp(C\cdot\gamma)\ a. s.$$
As $\gamma<0$, the above determines the growth of $W_t$ according to 
$$\liminf\limits_{t\to\infty}((W_t)^{|\gamma|})^\frac{1}{t}\geq\exp(C\cdot|\gamma|) \ a. s.$$
 Hence,  we have
 \begin{equation}\label{WW}
\liminf\limits_{t\to\infty}(W_t)^\frac{1}{t}\geq\exp(C)\ a. s.
\end{equation}
and $$\liminf\limits_{t\to\infty}\frac{1}{t}\log W_t\geq C\ a.s.$$
which proves the first part of the theorem.

Now we assume that $C>0$ and we will discuss the hitting times $T_b=\inf\{t\in\N\colon W_t>b\}.$
 Let
$$\tau_{\eps}=\inf\{t\in\N\colon (W_t)^\gamma<\eps\}=\inf\{t\in\N\colon W_t>(\tfrac1\eps)^{|\gamma|^{-1}}\}.$$
Condition $C>0$  implies $\exp(\gamma\cdot C)<1$. By \eqref{Aqua} and Theorem \ref{T2}, we have
\begin{equation}\label{Tosia}\limsup\limits_{\eps\to0^+}\frac{E[\tau_\eps]}{|\log(\eps)|}\leq\frac{1}{C\cdot|\gamma|}.\end{equation}
Let 
$$T(\eps)=\inf\{t\in\N\colon W_t>\frac{1}{\eps}\}$$
so we have $\tau_\eps=\inf\{t\in\N\colon W_t>\frac{1}{\eps^{|\gamma|^{-1}}}\}=  T(\eps^{|\gamma|^{-1}})$ and 
$$\limsup\limits_{\eps\to0^+}\frac{E[\tau_\eps]}{|\log(\eps)|}=\limsup\limits_{\eps\to0^+}\frac{E[T(\eps^{\frac{1}{|\gamma|}})]}{|\log(\eps)|}=\limsup\limits_{\eps\to0^+}\frac{E[T(\eps)]}{|\log(\eps^{|\gamma|})|}=\limsup\limits_{\eps\to0^+}\frac{E[T(\eps)]}{|\gamma|\cdot|\log(\eps)|}.$$
Hence, by \eqref{Tosia}, $\limsup\limits_{\eps\to0^+}\frac{E[T(\eps)]}{|\gamma|\cdot|\log(\eps)|}\leq\frac{1}{|\gamma|\cdot C}$, and thus
$$ \limsup\limits_{\eps\to0^+}\frac{E[T(\eps)]}{|\log(\eps)|}\leq\frac{1}{C}.$$
For $b=\frac{1}{\eps}$ we have $T_b=T(\eps)$ and $|\log(\eps)|=\log(\frac1\eps)=\log(b)$. We thus have
$$\limsup\limits_{b\to+\infty}\frac{E[T_b]}{\log(b)}\leq\frac{1}{C}.$$ \hfill{$\Box$}

\subsection{Estimation} Assume that a sequence $Z_t\in\R$ approaches the unknown parameter $\theta\in\Theta\subset\R$ and that the mean squared error $E|Z_t-\theta|^2$ satisfies 
$$C=\limsup\limits_{t\to\infty}(E|Z_t-\theta|^2)^\frac{1}{t}.$$
The results of Section \ref{SEC} can be applied to the sequence $X_t=|Z_t-\theta|^2$. In particular, by Theorem \ref{T1}, we bound the convergence rate of the trajectories
$$\PP[\limsup\limits_{t\to\infty} (|Z_t-\theta|^2)^\frac{1}{t}\leq C]=1\mbox { and hence } \PP[\limsup\limits_{t\to\infty} (|Z_t-\theta|)^\frac{1}{t}\leq \sqrt{C}]=1.$$ 
More generally, if $(K,d)$ is a metric space, $\theta\in K$ and $Z_t\in K$ converges in mean to $\theta$, i.e. $E[d(Z_t,\theta)]\to0$, then   the results of Section \ref{SEC} may be applied to the sequence $X_t:=d(Z_t,\theta)$.

%

{}
\end{document}